\documentclass[11pt, twoside]{article}
\usepackage{amssymb, amsmath, amsthm}
\usepackage[top=30mm, left=23.5mm, right=23.5mm, bottom=27mm]{geometry}
\usepackage{fancyhdr}
\usepackage{tikz}
\usetikzlibrary{positioning}
\usepackage{titling}
\usepackage{hyperref}
\predate{}
\postdate{}
\usepackage{lipsum}
\newtheorem{theorem}{Theorem}
\newtheorem{lemma}[theorem]{Lemma}

\newtheorem{conjecture}[theorem]{Conjecture}
\usepackage{titling}
\usepackage{xcolor}
\usepackage{placeins}
\usepackage{tikz}
\usepackage{verbatim}
\usepackage{titling}
\usepackage[T1]{fontenc}
\usepackage{currvita}
\usepackage{bbm}
\usepackage{enumitem}
\newtheorem{corollary}[theorem]{Corollary}

\theoremstyle{definition}

\theoremstyle{remark}

\newtheorem*{theorem*}{Theorem}
\usepackage[T1]{fontenc}
\usepackage[utf8]{inputenc}
\usepackage{lmodern}
\usepackage{listings}
\usepackage[capitalise,noabbrev]{cleveref}
\let\OLDthebibliography\thebibliography
\renewcommand\thebibliography[1]{
  \OLDthebibliography{#1}
  \setlength{\parskip}{0pt}
  \setlength{\itemsep}{0pt plus 0.3ex}
}
\begin{document}
\newcommand{\Addresses}{{
\bigskip
\footnotesize
\medskip

\noindent Maria-Romina~Ivan, \textsc{
Department of Pure Mathematics and Mathematical Statistics, Centre for Mathematical Sciences, Wilberforce Road, Cambridge, CB3 0WB, UK.}\\\nopagebreak\textit{Email address: }\texttt{mri25@cam.ac.uk}

\medskip

\noindent Imre~Leader, \textsc{Department of Pure Mathematics and Mathematical Statistics, Centre for Mathematical Sciences, Wilberforce Road, Cambridge, CB3 0WB, UK.}\\\nopagebreak\textit{Email address: }\texttt{i.leader@dpmms.cam.ac.uk}

\medskip

\noindent Mark~Walters, \textsc{School of Mathematical Sciences, Queen Mary University of London, London, E1 4NS, UK.}\\\nopagebreak\textit{Email address: }\texttt{m.walters@qmul.ac.uk}
\medskip}}

\pagestyle{fancy}
\fancyhf{}
\fancyhead [LE, RO] {\thepage}
\fancyhead [CE] {MARIA-ROMINA IVAN, IMRE LEADER AND MARK WALTERS}
\fancyhead [CO] {GENERALISED PRISMS AND EUCLIDEAN RAMSEY THEORY}
\renewcommand{\headrulewidth}{0pt}
\renewcommand{\l}{\rule{6em}{1pt}\ }
\title{\Large\textbf{GENERALISED PRISMS AND EUCLIDEAN RAMSEY THEORY}}
\author{MARIA-ROMINA IVAN, IMRE LEADER AND MARK WALTERS}
\date{}
\maketitle

\begin{abstract}
A finite subset $X$ of $\mathbb R^d$ is called Ramsey if for every $k$ there exists an $n$ such that whenever $\mathbb R^n$ is $k$-coloured there exists a monochromatic congruent copy of $X$. K\v r\'i\v z showed that if there is a soluble group of symmetries of $X$ that acts transitively on $X$, then $X$ is Ramsey. Determining which sets are Ramsey is a major unsolved problem.

In this paper we show that if there is a finite group of isometries of $\mathbb R^d$ that acts transitively on a set $X$, and also on a set $Y$, then the `prism' formed by $X$ and $Y$ in 
$\mathbb R^{d+1}$ (meaning the set $X$ together with a translate of $Y$ in the direction perpendicular to 
$\mathbb R^d$) is itself contained in a finite set on which a group of isometries acts transitively. Moreover, if the initial group of isometries is soluble then so is the final group. This provides a new tool for generating Ramsey sets.
\end{abstract}

\section{Introduction}

A finite subset $X$ of some Euclidean space $\mathbb R^d$ is called \textit{Euclidean Ramsey} or simplex \textit{Ramsey} if for every $k$ there is an $n$ such that whenever $\mathbb R^n$ is $k$-coloured there is a monochromatic copy of $X$. Here a `copy' means a congruent copy (also called an isometric copy). Erd\H{o}s, Graham, Montgomery, Rothschild, Spencer and Straus \cite{EGMRSS} introduced this concept in the 1970s.  They showed that if a set is not spherical (meaning that it is contained in a sphere) then it cannot be Ramsey. They conjectured that the converse holds, in other words that a set is Ramsey if and only if it is spherical. There is a `rival' conjecture by Leader, Russell and Walters \cite{LRW}, stating that a set is Ramsey if and only if it is a subset of some finite transitive set $Y$ (meaning a set whose symmetry group acts transitively). We stress that $Y$ may live in a higher-dimensional space than $X$ itself. It turns out that these are
distinct conjectures, although this is not immediately obvious (see \cite{LRW}). 

The best positive result to date is by K\v r\'i\v z \cite{K}. He showed that, if a set $X$ has a subgroup $G$ of its symmetry group that is soluble and acts transitively on $X$, then $X$ is Ramsey. Of course, it follows that subsets of such a set are also Ramsey. Often subsets of such a set (called a soluble set) are called \textit{subsoluble}, while subsets of a transitive set are called \textit{subtransitive}.  

There are many examples of Ramsey sets now known. Often these did not originally come from the result of K\v r\'i\v z above, but a remarkable recent paper of Behague \cite{B} shows that, with one possible exception, every
set that is known to be Ramsey is actually subsoluble. (The exception is the 4-dimensional regular polytope known as the 120-cell.) We mention also an attractive prior result of Karamanlis~\cite{KAR},  who showed that every simplex, in any dimension, is subsoluble -- indeed, this is needed for Behague's result. (The fact that simplices are Ramsey was originally proved by Frankl and R\"odl~\cite{FR2}.)

Our aim in this paper is to introduce some new examples of sets that are subsoluble, and hence are Ramsey. Note that in general it may be hard to determine whether or not a set does embed into a soluble set -- indeed, no algorithm is known for answering this (or indeed for determining whether or not a given set is subtransitive).

Our sets are `generalised prisms', defined in the following way. We have two sets $X$ and $Y$ 
in $\mathbb R^d$; a prism on $X$ and $Y$ is then the subset of $\mathbb R^{d+1}$, viewed as $\mathbb R^d \times \mathbb R$, given by $(X \times \{ 0 \}) \cup (Y \times \{ \lambda \})$ for some non-zero $\lambda$. Note that if $X$ and $Y$ coincide then this is just the usual notion of a prism. 

Our main result is as follows. Suppose that the sets $X$ and $Y$ in 
$\mathbb R^d$ are each transitive. Moreover, they are `simultaneously transitive', in the sense that the transitive symmetry groups of $X$ and $Y$ may be taken to be equal. In other words, there exists a finite group $G$ of isometries of $\mathbb R^d$ that acts transitively on $X$ and also on $Y$. 
Then any prism on $X$ and $Y$ is subtransitive. Moreover, if $G$ is soluble then the prism is in fact subsoluble -- in particular, any prism on $X$ and $Y$ is Ramsey.

For example, if $X$ is a rectangle, and $Y$ is a congruent rectangle rotated about its centre by some rational angle, then the prism on $X$ and $Y$ is Ramsey. Perhaps a more interesting consequence is the following. Let $X$ be any subsoluble set in $\mathbb R^d \subset \mathbb R^{d+1}$. Then if we add any one point to $X$, not lying in the hyperplane of $X$, then the resulting set is Ramsey.

The plan of the paper is as follows. We prove the above result in Section 2.
There is also an abstract version of the conjecture that all transitive sets are Ramsey: this is called the block sets conjecture \cite{LRW}. We give the relevant definitions in Section 3, and then go on to
consider the corresponding solubility barrier. In particular, we give an example of a template that has no soluble transitive subgroup of its symmetry group, and yet embeds into a template that does.  
Finally, in Section 4 we present some open problems.

Through the paper our notation is standard. We denote by $D_{2n}$ the dihedral group of size $2n$, i.e. the group of symmetries of a regular $n$-gon, by $C_k$ the cyclic group of order $k$, and by $S_l$ the group of all permutations of $l$ distinct elements. 
\section{New subtransitive sets from old}
The main result of this section is Theorem~\ref{t:prisms}. We show that given two sets with a common symmetry group acting transitively on each on them, `lifting' just one of them above, by any amount, results in a subtransitive set. Furthermore, if the symmetry group is soluble, then the resulting set is also subsoluble, and in particular Ramsey. We make this precise below.

\begin{theorem}\label{t:prisms}
Let $X$ and $Y$ be two finite sets in $\mathbb R^d$, and $G$ a finite group of isometries of $\mathbb R^d$ that acts transitively on both $X$ and $Y$. Then, for any $\lambda\neq 0$ the set $Z=(X,0)\cup (Y,\lambda)\subset \mathbb R^{d+1}$ is a subset of a finite transitive set $W$ in $\mathbb R^m$ for some $m$. Moreover, if $G$ is soluble then the group acting transitively on $W$ may be taken to be soluble as well, so that in particular $Z$ is Ramsey.
\end{theorem}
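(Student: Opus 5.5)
The plan is to build $W$ as a single orbit of a finite group $H$ acting linearly on a larger Euclidean space. The group $H$ will be generated by a copy of $G$ together with one extra involution $\tau$ that interchanges the ``$X$-layer'' and the ``$Y$-layer''. We will aim for $H=G\times C_2$, or a small extension of it, so that $H$ is soluble whenever $G$ is.

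\textbf{Normalisation.} A finite group of isometries fixes the centroid of any of its orbits, so all of $G$ fixes a common point. We may take this point to be the origin, so that $G\le O(d)$ acts linearly. Fix $x\in X$ and $y\in Y$, so that $X=Gx$ and $Y=Gy$. Then every distance occurring in $Z$ is one of three kinds:
\[
|gx-hx|^2,\qquad |gy-hy|^2,\qquad |gx-hy|^2+\lambda^2 .
\]
These are determined by the inner products $\langle gx,hx\rangle$, $\langle gy,hy\rangle$, $\langle gx,hy\rangle$ and the constants $|x|,|y|,\lambda$. It therefore suffices to find vectors $A_g$ and $B_g$ ($g\in G$) in some $\mathbb R^m$ satisfying three conditions. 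First, the Gram data of the $A_g$, of the $B_g$, and between the $A_g$ and the $B_h$ must reproduce the three kinds of distance above. Second, all these vectors must lie in one orbit of a finite group $H$. Third, $A_g$ should depend only on $gx$, and $B_g$ only on $gy$.

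\textbf{Construction.} We would work in $V=\mathbb R^d\otimes\mathbb R^k\oplus\mathbb R^\ell$. Here $G$ acts on the first tensor factor, and $\tau$ acts by a suitable orthogonal map $R$ on $\mathbb R^k$ and by $-1$ (or a reflection) on the extra summand $\mathbb R^\ell$. The orbit to use is that of a vector
\[
v=x\otimes a+y\otimes b+c .
\]
With this choice $A_g=gv$ and $B_g=g\tau v$. The cross terms come out as multiples of $\langle gx,hy\rangle$, and the extra summand contributes a constant, to be tuned to $\lambda^2$, to $A$--$B$ distances only. The parameters $a,b\in\mathbb R^k$, the map $R$, and $c$ are then chosen so that the coefficients match.

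The first thing we would check is that $\tau$ commutes with $G$. This holds because $\tau$ acts on factors that $G$ does not touch. Consequently $H=G\times\langle\tau\rangle$ is finite, it is soluble if $G$ is, and its orbit $W=Hv$ is finite and transitive. By construction $W$ contains an isometric copy of $Z$. Ramseyness then follows from K\v r\'i\v z's theorem.

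\textbf{Main obstacle.} The hard step is decoupling. With the na\"ive choice above, distances within the $A$-layer pick up a contribution $|gy-hy|^2$ in addition to $|gx-hx|^2$, because $A_g$ depends on $g$ and not just on $gx$. To remove this we would enlarge the construction. The idea is to index points by pairs, using $G\times G$ acting on two separate copies of $\mathbb R^d$ (a wreath-type group $(G\times G)\rtimes C_2$, still soluble when $G$ is). We would then place $Z$ inside the orbit so that the ``unwanted'' block is held fixed along each layer. The target is that the cross inner product $\langle gx,hy\rangle$ appears with coefficient exactly $1$ while the unwanted block contributes only a constant. This constant is then absorbed into the $\mathbb R^\ell$ summand, whose length can be adjusted freely because $\lambda\neq0$ is arbitrary. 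Any genuinely hard case would show up in verifying that these coefficients can be matched simultaneously, with the constant coming out nonnegative.
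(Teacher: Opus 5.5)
There is a genuine gap, and it is exactly the step you flag as the main obstacle: the decoupling is never carried out, and the proposed remedy cannot work. With $(G\times G)\rtimes C_2$ acting on two copies of $\mathbb R^d$ (plus a sign on $\mathbb R^\ell$), the natural placement has each layer an isometric copy of $X$, respectively $Y$, and the cross term $\langle gx,hy\rangle$ with coefficient $1$. That gives layers $\{(gx,y_0,c)\}$ and $\{(hy,x_0,-c)\}$ with $x_0\in X$, $y_0\in Y$ fixed. Their cross distances are $\|gx-hy\|^2+\|x_0-y_0\|^2+4\|c\|^2$, so you only reach $\lambda\ge\min_{x\in X,\,y\in Y}\|x-y\|$. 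Absorbing the constant into the $\mathbb R^\ell$ summand runs the wrong way. Here $\lambda$ is prescribed, and that summand can only \emph{add} to the $A$--$B$ distances. So you need the unwanted constant to be at most $\lambda^2$, not merely non-negative.

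In fact no group of bounded order can do the job. Suppose $Z$ is isometric to a subset of an orbit $Hv$ of a finite orthogonal group $H$. We may assume $v$ lies in the copy of $Z$, and write its other points as $k_iv$. Let $K=\langle k_i\rangle$ and $P=\frac{1}{|K|}\sum_{k\in K}k$, the projection onto $K$-fixed vectors. Then the copy of $Z$ lies in $Kv$, hence on the sphere with centre $Pv$ and radius
$\|v-Pv\|\le\max_{k\in K}\|v-kv\|\le(|K|-1)\,\mathrm{diam}\,Z$.
The last step is the triangle inequality along a word in the $k_i^{\pm1}$, each letter costing $\|v-k_iv\|\le\mathrm{diam}\,Z$. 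Any sphere containing $Z$ has radius at least its circumradius $R_Z$, so $R_Z\le(|H|-1)\,\mathrm{diam}\,Z$.

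Now take $X=\{\pm a\}$, $Y=\{\pm b\}$ with $a\ne b$ and $G=C_2$. The circumcentre of $Z$ is $(0,(\lambda^2+b^2-a^2)/(2\lambda))$. Hence $R_Z\to\infty$ as $\lambda\to0$ while $\mathrm{diam}\,Z$ stays bounded, and no group of order $8$ (such as your $C_2\wr C_2$) contains these trapezia for small $\lambda$.

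The group must therefore grow as $\lambda\to0$, and this is the idea you are missing. The paper interpolates through $X_i=\{\frac{n-i}{n}gx+\frac{i}{n}gy:g\in G\}$, $0\le i\le n$. It takes $W$ to be the union of the cyclic rotations of $X_0\times\cdots\times X_n$, which is transitive under $G\wr C_{n+1}$ and soluble when $G$ is. Put $x_i=\frac{n-i}{n}x+\frac{i}{n}y$. Then $W$ contains $X\times(x_1,\dots,x_n)$ and $Y\times(x_0,\dots,x_{n-1})$, which differ by $\frac{1}{n}(x-y)$ in each of the $n$ extra slots, giving $\lambda_n=\|x-y\|/\sqrt{n}$. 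Your two-copy construction is precisely the case $n=1$. A separate step, passing from $U$ to $U\times\{0,a\}$, raises $\lambda_n$ to any $\lambda\ge\lambda_n$; that, rather than lowering $\lambda$, is the legitimate role of your extra summand.
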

It is worth mentioning that the result is not true when $\lambda=0$. For example, if $X$ is an equilateral triangle and $Y$ is its centre, then $X\cup Y$ cannot be Ramsey (or subtransitive), as it is not even spherical.

For a very simple example, consider the sets $X=\{-a,+a\}$ and $Y=\{-b,+b\}$ for some positive real numbers $a,b$. Of course, the ambient starting space here is $\mathbb R$, i.e. $d=1$. The reflection in the origin in $\mathbb R$ acts transitively on both $X$ and $Y$, thus the soluble group $C_2$ acts transitively on both $X$ and $Y$. Now, given $\lambda>0$, the set $(X,0) \cup (Y,\lambda)$ is a trapezium. Since any isosceles trapezium can be obtained by choosing $a,b$ and $\lambda$ appropriately, Theorem~\ref{t:prisms} shows that any isosceles trapezium is subsoluble, and consequently Ramsey. 
(The fact that all isosceles trapezia are Ramsey was proved by K\v r\'i\v z \cite{K2}, and the fact that they are subsoluble was proved by Behague \cite{B}.)

As another example, consider $X$ to be an equilateral triangle centred at the origin and $Y$ the equilateral triangle formed by its midpoints. Then $X$, $Y$ and $G=D_6$ satisfy the conditions of Theorem~\ref{t:prisms}, hence, for any $\lambda\neq 0$, the set $(X,0)\cup (Y,\lambda)$ is subsoluble, and consequently Ramsey. The configuration is depicted below, with the final set made up of the red and green points.
\begin{center}
\includegraphics[width=20em]{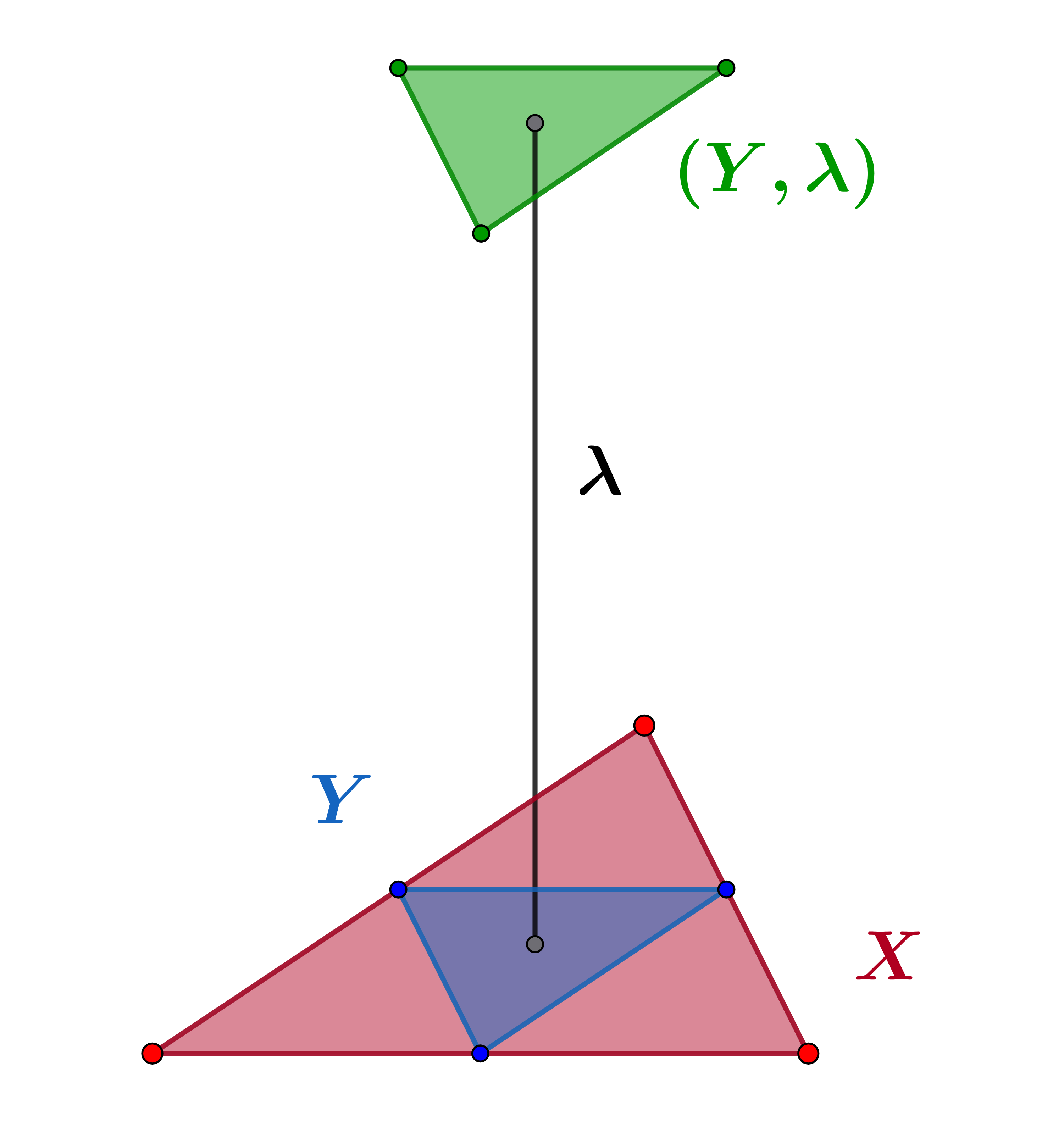}
\end{center}
Our proof consists of two parts. Fix $X$ and $Y$. First, we show in Lemma~\ref{l:deeper} that if the result is true for some $\lambda'>0$, then it is also true for any $\lambda''\geq\lambda'$: this is a standard kind of argument. Secondly, we show in Lemma~\ref{l:higher} that the result is true for a positive sequence $(\lambda_n)_{n\geq1}$ such that $\lambda_n\rightarrow 0$: this is the delicate part of the proof. By reflecting in the plane $x.e_{d+1}=0$ (where $e_{d+1}$ denotes the point $(0,1)$), we also get that the result is true for all negative $\lambda$.

\begin{proof}[Proof of Theorem~\ref{t:prisms}] For the entirety of the proof, $X$ and $Y$ are fixed. In particular, when referring to Theorem~\ref{t:prisms}, we always mean `for the $X$ and $Y$ fixed at the beginning'.
\begin{lemma}\label{l:deeper} Suppose that there exists $\lambda'>0$ for which Theorem~\ref{t:prisms} is true. Then, for any $\lambda''\geq\lambda'$ Theorem~\ref{t:prisms} is also true. In other words, the set $Z'=(X,0)\cup (Y,\lambda)$ is subtransitive, and if $G$ is soluble, then $Z'$ is subsoluble.
\end{lemma}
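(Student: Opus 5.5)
We would use the standard product construction. Suppose $Z_{\lambda'}=(X,0)\cup(Y,\lambda')$ is contained in a finite transitive set $W'\subset\mathbb R^{m'}$ with transitive group $H'$, soluble if $G$ is soluble. Write $\lambda''\ge\lambda'$ as $\lambda''^2=\lambda'^2+\mu^2$ for some $\mu\ge 0$. The goal is to realise $Z_{\lambda''}$ inside a product of $W'$ with a two-point set. Take $W''=W'\times\{0,\mu\}\subset\mathbb R^{m'}\times\mathbb R=\mathbb R^{m'+1}$. This set is transitive under $H'\times C_2$, where $C_2$ acts on the last coordinate by the reflection $t\mapsto \mu-t$. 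The group is finite, and it is soluble if $H'$ is, since a direct product of soluble groups is soluble. If $\mu=0$ there is nothing to prove.

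Next we find an isometric copy of $Z_{\lambda''}$ inside $W''$. Map each point $(x,0)$ with $x\in X$ to $(\phi(x,0),0)$, and each point $(y,\lambda'')$ with $y\in Y$ to $(\phi(y,\lambda'),\mu)$, where $\phi$ is the given isometric embedding of $Z_{\lambda'}$ into $W'$. We check that distances are preserved.

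\begin{itemize}[label={}]
\item Two points of $X$, or two points of $Y$, lie at the same height, so their distances are preserved by $\phi$.
\item For $x\in X$ and $y\in Y$, the original squared distance is $|x-y|^2+\lambda''^2$. The squared distance of the images is $\|\phi(x,0)-\phi(y,\lambda')\|^2+\mu^2=|x-y|^2+\lambda'^2+\mu^2$, which is the same.
\end{itemize}

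So $Z_{\lambda''}$ is congruent to a subset of $W''$. The property of being contained in a finite (soluble) transitive set is invariant under congruence, so $Z_{\lambda''}$ is subtransitive, and subsoluble when $G$ is soluble. Ramseyness then follows from K\v r\'i\v z's theorem.

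There is no real obstacle here; the only point needing care is the mixed distances, which is exactly why $\lambda''\ge\lambda'$ is required, so that $\mu$ is real. We also note that the hypothesis on $G$ is used only through the assumption that the statement holds for $\lambda'$.
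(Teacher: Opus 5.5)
Your proof is correct and is essentially the paper's argument: the paper also takes $W=U\times\{0,a\}$ with $a^2+\lambda'^2=\lambda''^2$, acted on transitively by $H\times C_2$, and finds the copy $(X,0,0)\cup(Y,\lambda',a)$ of $Z'$ inside it. Your explicit check of the mixed distances through the embedding $\phi$ is the same step written out in more detail.
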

\begin{proof}
By assumption, $Z=(X,0)\cup(Y,\lambda)$ is subtransitive, therefore it embeds in a finite set $U\subset\mathbb R^m$, for some $m \geq d+1$, such that $U$ has a transitive symmetry group.

Now consider the set $W=U\times \{0,a\}$ for some real number $a\neq 0$. The set $W$ contains an isometric copy of the set $Z\times \{0,a\}$ which, in turn, contains an isometric copy of the set $(X,0,0)\cup (Y,\lambda',a) \subset \mathbb R^{d+2}$.

We choose $a$ such that ${\lambda'}^2+a^2={\lambda''}^2$ (this is possible as $\lambda''\geq\lambda')$, which implies that $(X,0,0)\cup ((Y,\lambda',a)$ is an isometric copy of $Z'$. Therefore, for this choice of $a$, the set $W$ contains an isometric copy of $Z'$.

Since $U$ is transitive, $W$ is transitive with group $C_2\times H$, where $H$ is the symmetry group of $U$. Thus, $Z'$ is subtransitive as claimed. 

Finally, suppose that $G$ is soluble. Then, by assumption, we may assume that $U$ is subsoluble, hence $H$ is soluble. Therefore $H\times C_2$ is soluble, which tells us that $Z'$ is subsoluble, finishing the proof.

For a colour-coded guide to the proof, we provide the picture below.
\begin{center}
\includegraphics[width=35em]{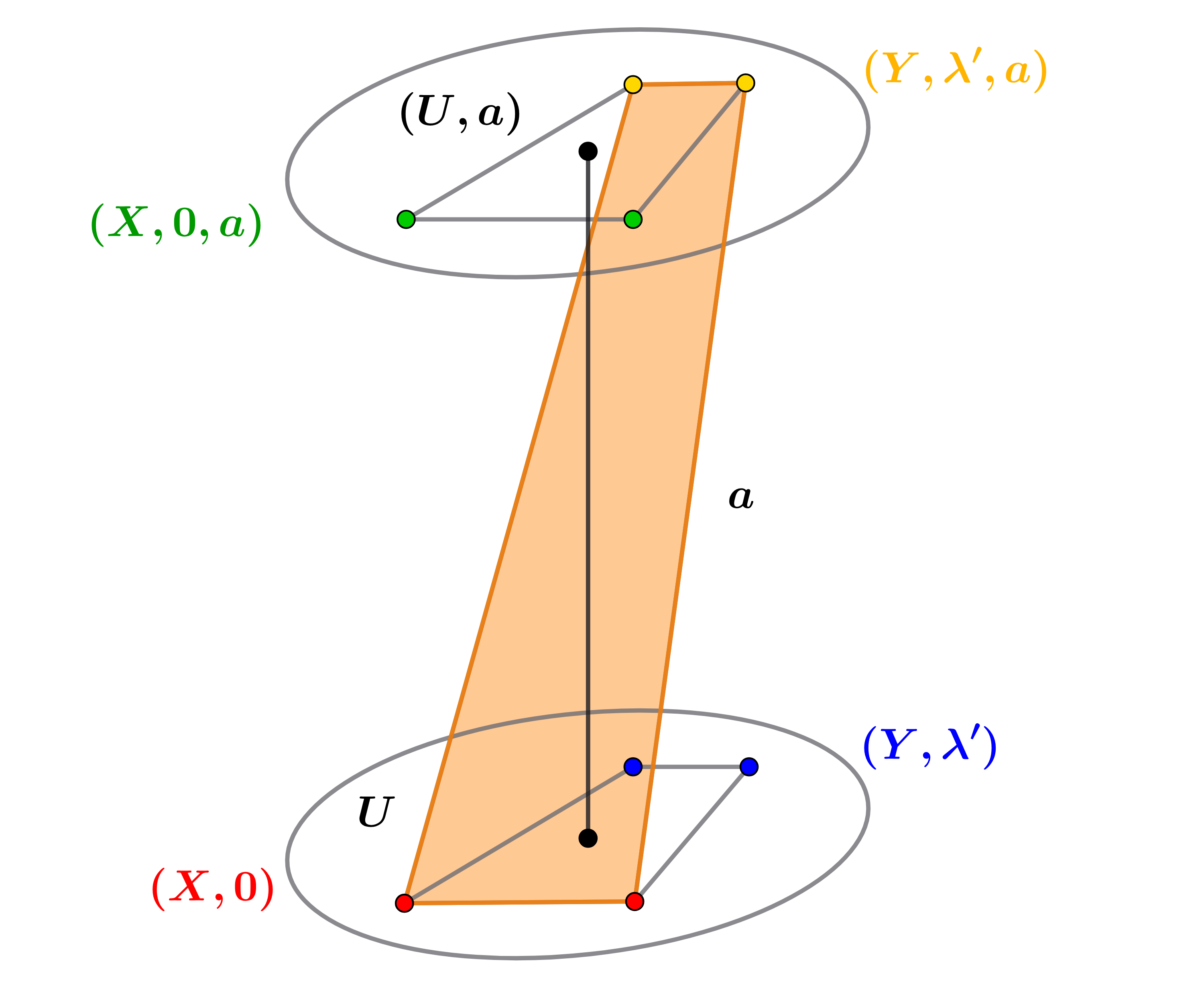}
\end{center}
\end{proof}
It thus suffices to prove Theorem~\ref{t:prisms} for a sequence of arbitrarily small positive values of $\lambda$. 
\begin{lemma}\label{l:higher} Let $x\in X$, $y\in Y$, and $n\in\mathbb N$. Then Theorem~\ref{t:prisms} is true for $\lambda_n=\frac{1}{\sqrt n}\|x-y\|$.
\end{lemma}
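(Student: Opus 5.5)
The plan is to realise the prism inside one orbit of a wreath-type group acting on $(\mathbb R^d)^N$ for suitable $N$ (depending on $n$). If $G$ is soluble, this group is soluble as well.

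\textbf{Setup.} Let $\Gamma = G^N \rtimes C_N$. Here $G^N$ acts coordinatewise on $(\mathbb R^d)^N$ and $C_N$ cyclically permutes the $N$ blocks. These maps are isometries, and $\Gamma$ is finite. It is soluble whenever $G$ is, being an extension of the abelian group $C_N$ by the soluble group $G^N$; I use $C_N$ rather than the full symmetric group precisely for this. For a fixed pattern, e.g.\ a tuple whose blocks are all in $X$ except for a prescribed number of blocks in $Y$, the $\Gamma$-orbit is a finite transitive set $W$. This uses the transitivity of $G$ on $X$ and on $Y$ separately, plus $C_N$ to move the $Y$-blocks around; the price is that the positions of the $Y$-blocks must form a $C_N$-invariant pattern up to rotation. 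So the task is reduced to finding, inside such a $W$ (suitably rescaled), a copy of $Z=(X,0)\cup(Y,\lambda_n)$ with $\lambda_n^2=\|x-y\|^2/n$.

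\textbf{Key steps.} First, I would write down two families of points of $W$ indexed by $g\in G$: a family $u(g)$ in which the varying blocks are $gx$, and a family $v(g)$ in which the varying blocks are $gy$. Each family should be scaled by $1/\sqrt{k}$, where $k$ is the number of varying blocks. I want $\|u(g)-u(h)\|=\|gx-hx\|$ and $\|v(g)-v(h)\|=\|gy-hy\|$, so that the two families are copies of $X$ and $Y$. Second, I would compute the cross distances $\|u(g)-v(h)\|^2$ as a sum over blocks. The aim is for the blocks to telescope to $\|gx-hy\|^2$ plus a constant, and for that constant to equal $\|x-y\|^2/n$. The natural ansatz is that $u$ and $v$ agree except in a controlled set of blocks, where one has $gx$ and the other $gy$ with the same $g$. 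Such blocks contribute multiples of $\|gx-gy\|^2=\|x-y\|^2$, which is a constant because $g$ is an isometry. Dividing by the normalisation then produces the factor $1/n$. Third, since all pairwise distances in $u(G)\cup v(G)$ then match those in $Z$, the set $u(G)\cup v(G)$ is an isometric copy of $Z$. Here one uses that any two finite sets with equal distance matrices are congruent, after enlarging the ambient space. Hence $Z\subset W$ up to isometry, which gives the claim, with a soluble group when $G$ is soluble.

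\textbf{Main obstacle.} The difficulty is the bookkeeping in the first two steps. A diagonal choice such as $(gx,\dots,gx)$ followed by blocks fixed at base points gives within-family distances of the form $\alpha\|gx-hx\|^2+\beta\|gy-hy\|^2$. These mix $X$ and $Y$, and the mixing has to be avoided. The mixing has to cancel: equivalently, each family must differ from itself only in blocks of a single type, while across the two families the blocks of the other type must contribute only the constant $\|x-y\|^2$ (via the same $g$) and no $\|x-hy\|$-type terms. I would first try splitting the $N$ blocks into a region carrying the diagonal element $g$ and a region held at fixed base points $x$ or $y$, with $n$ controlling how many blocks switch between $x$ and $y$. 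If a single orbit cannot accommodate both families, I would instead take $W$ to be a product of such orbits, which is still transitive, and I would correct any leftover constant using the lifting of Lemma~\ref{l:deeper}. That lemma only permits increasing $\lambda$, so I must make sure the construction undershoots $\lambda_n$ rather than overshooting it.
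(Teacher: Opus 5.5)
\textbf{There is a genuine gap.} No configuration of the kind you describe can give a vertical offset much below the distance from $X$ to $Y$. So you cannot reach $\lambda_n=\|x-y\|/\sqrt n$ for large $n$, and Lemma~\ref{l:deeper} cannot repair this, since it only increases $\lambda$.

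The obstruction is a conservation law. In your $W$ every block of every point lies in $X\cup Y$, and the number of $Y$-blocks is the same for all points of a $\Gamma$-orbit, and also for all points of any product of such orbits. In your ansatz the $k$ varying blocks are $X$-blocks in $u(g)$ and $Y$-blocks in $v(h)$. So there must be at least $k$ other blocks in which $u$ carries a point of $Y$ and $v$ a point of $X$. If these blocks are fixed, they add at least $k\,\mathrm{dist}(X,Y)^2$ to the constant. After dividing by $k$ you get $\lambda^2\ge \mathrm{dist}(X,Y)^2$, not $\|x-y\|^2/n$. Your step ``dividing by the normalisation then produces the factor $1/n$'' is exactly what fails.

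This is not an artefact of the ansatz. Take $X=\{\pm1\}$, $Y=\{\pm2\}$, $G=C_2$. Suppose $u_\pm,v_\pm\in W$ form a copy of $(X,0)\cup(Y,\lambda)$ scaled by $\sqrt k$, with every coordinate in $\{\pm1,\pm2\}$ and a constant number of $\pm2$'s. The affine relation $v_+-v_-=2(u_+-u_-)$ forces each coordinate $j$ into one of three types:
\begin{enumerate}
\item[(A)] $u^j_+=-u^j_-=\pm1$ and $v^j_+=-v^j_-=2u^j_+$;
\item[(B)] $u^j_+,u^j_-$ have the same sign with absolute values $\{1,2\}$, and $v^j_+=-v^j_-=\pm1$;
\item[(C)] $u^j_+=u^j_-$ and $v^j_+=v^j_-$.
\end{enumerate}
Then $4k=4|A|+|B|$ and $k\lambda^2=\|\bar u-\bar v\|^2\ge \tfrac94|B|+\#\{j\in C:|u^j|\neq|v^j|\}$. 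Equality of the $Y$-counts of $u_+$ and $u_-$ splits $B$ evenly between them. Equality with the $Y$-count of $v$ then forces that last count to be at least $\bigl||A|-\tfrac12|B|\bigr|$. Hence $k\lambda^2\ge \tfrac94|B|+\bigl||A|-\tfrac12|B|\bigr|\ge |A|+\tfrac14|B|=k$. So $\lambda\ge 1=\|x-y\|$ for $x=1$, $y=2$, whereas the lemma asks for $\lambda=1/\sqrt n$.

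\textbf{The missing idea} is to use blocks lying in neither $X$ nor $Y$. Translate a $G$-fixed point to the origin, so that $G$ acts linearly. Then each set $X_i=\{\tfrac{n-i}{n}g(x)+\tfrac in g(y):g\in G\}$, for $0\le i\le n$, is a $G$-orbit, with $X_0=X$ and $X_n=Y$.

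The paper lets $U^{(k)}$ be the cyclic rotation of $X_0\times\dots\times X_n$ beginning at $X_k$, and takes $W=\bigcup_k U^{(k)}$. The group $G\wr C_{n+1}$ acts transitively on $W$, and it is soluble if $G$ is. Put $x_i=\tfrac{n-i}{n}x+\tfrac in y$. Then $X\times(x_1,\dots,x_n)\subset U^{(0)}$ and $Y\times(x_0,\dots,x_{n-1})\subset U^{(n)}$ are translates of $X$ and $Y$ perpendicular to the first slot. Their translation vectors differ by $\tfrac1n(x-y)$ in each of $n$ slots, which gives offset $\sqrt n\cdot\tfrac1n\|x-y\|=\lambda_n$.

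The interpolating orbits are what let $X$ and $Y$ occupy the same slot while every other slot moves only by a small step. That is precisely what the conservation law forbids when blocks are restricted to $X\cup Y$.
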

\begin{proof} We know that any isometry of $\mathbb R^d$ is an affine map. Since $G$ is finite, by averaging the orbit of a point under the action of $G$, we get that $G$ fixes at least one point, say $v$. By translating the sets $X$ and $Y$ by $-v$, we may assume without loss of generality that $v=0$, hence every element of $G$ is linear.

Let $\alpha>0,\beta>0$ be two real numbers, and consider the set $\{\alpha g(x)+\beta g(y):g\in G\}$. This set is acted on transitively by $G$. Indeed, $h(\alpha g(x)+\beta g(y))=\alpha h\circ g (x)+\beta h\circ g(y)$. 

We will be looking at a sequence of these transitive sets, for carefully chosen $\alpha$ and $\beta$ -- this sequence will `transform', in small steps, the set $X$ into the set $Y$. Finally, taking the union of the products of these cyclically permuted sets will yield our sought-after transitive set, corresponding to $\lambda_n=\frac{1}{\sqrt n}\|x-y\|$.

For all $0\leq i\leq n$ we define the sets $X_i=\{ \frac{n-i}{n} g(x)+\frac{i}{n} g(y):g\in G\}$. As noted above, $G$ acts transitively on every $X_i$, and indeed, we start at $X_0=X$ and finish at $X_n=Y$. Below we exhibit this gradual change for when $X$ and $Y$ are two concentric homothetic equilateral triangles, $x$ is a base point, $y$ is the top point, and $n=3$. This shows that $|X_i|$ can be greater than $\max(|X|,|Y|)$.
\begin{center}
\includegraphics[width=30em]{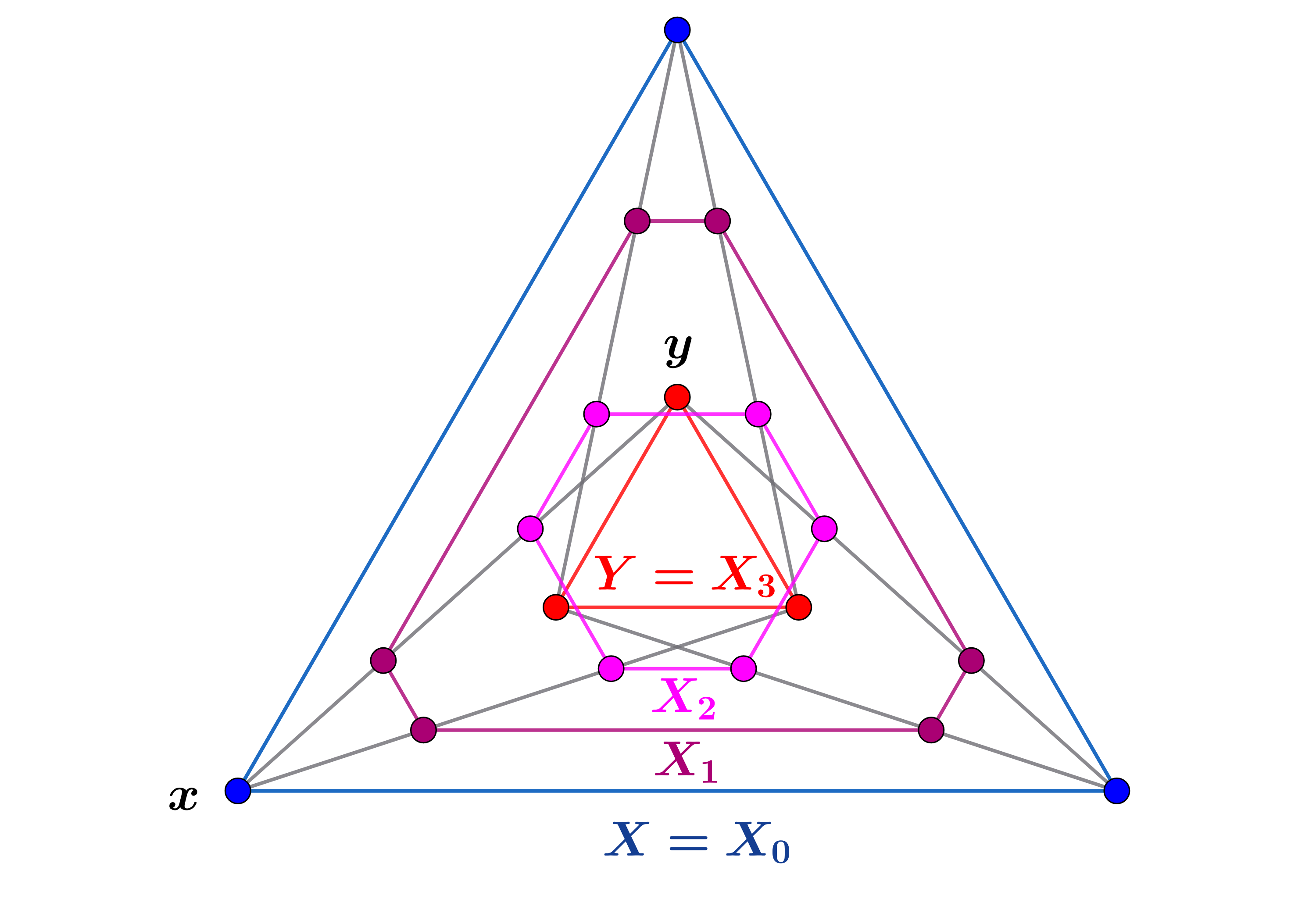}
\end{center}
We now consider the product set $U=X_0\times X_1\times X_2\times\dots\times X_n\subset\mathbb R^{d(n+1)}$. The direct product group of $n+1$ copies of $G$, namely $G^{n+1}$, acts on $U$ pointwise, i.e $(g_0,\dots,g_n).(x_0,\dots,x_n)=(g_0(x_0),\dots,g_n(x_n))$. Since the action of $G$ on each $X_i$ is transitive, $G^{n+1}$ acts transitively on $U$. 

Since we are looking to find parallel copies of $X$ and $Y$, we cycle the components of $U$ in order to bring $X_0=X$ and $X_n=Y$ to the same slot. Finally, to end up with one candidate set, we take the union of all these `rotations'. 

More formally, for all $0\leq i\leq n$ we define $U^{(k)}$ to be the set $X_k\times X_{k+1}\times \dots\times X_n\times X_0\times \dots\times X_{k-1}$. Finally, let $W=\bigcup_{k=0}^nU^{(k)}$, which has a natural transitive action given by the wreath product $G\wr C_{n+1}$. Furthermore, if $G$ is soluble, then so is $G\wr C_{n+1}$, in which case $W$ is soluble.

To be precise, the group $G\wr C_{n+1}$ is the semi-direct product $G^{n+1}\rtimes C_{n+1}$. Let $\sigma$ be the generator of $C_{n+1}$ such that $\sigma(i)=i+1\mod(n+1)$ for all $0\leq i\leq n$. An element of $G\wr C_{n+1}$ is of the form $((g_0,\dots,g_n),\sigma^k)$, where $g_i\in G$ for all $0\leq i\leq n$ and $0\leq k\leq n$. An element $w$ of $W$ is of the form $w=(w_0,\dots, w_n)$ where $w_0\in X_t, w_1\in X_{t+1},\dots,w_n\in X_{t-1}$. The action of $G\wr C_{n+1}$ on $W$ is given by the following:
$$((g_0,\dots,g_n),\sigma^k).w=((g_0,\dots,g_n),\sigma^k).(w_0,\dots,w_n)=(g_0(w_{\sigma^{-k}(0)}),g_1(w_{\sigma^{-k}(1)}),\dots,g_n(w_{\sigma^{-k}(n)})).$$

In order to see that this action is transitive, pick $w$ in $W$, say $w\in U^{(i)}$ for some $0\leq i\leq n$. For a given $0\leq j\leq n$, we want to reach any $w'\in U^{(j)}$. By choosing $k$ appropriately, we first ensure that we land in $U^{(j)}$. Finally, since the action of $G$ is transitive on each $X_l$, we can choose $(g_0,\dots,g_n)\in G^{n+1}$ to reach any element of $U^{(j)}$. Thus the action is indeed transitive on $W$.

To finish the proof, it remains to show that we can find a copy of $(X,0)\cup(Y,\lambda_n)$ in $W$. We start by observing that we have the following copies of $X$ and $Y$ in $W$ (in $U^{(0)}$ and $U^{(n)}$ respectively):
\begin{align*}
X'&=X\times\Big(\frac{n-1}{n}x+\frac{1}{n} y,&&\frac{n-2}{n}x+\frac{2}{n}y,&&\dots&&\frac{1}{n}x+\frac{n-1}{n} y,&&\frac{0}{n}x+\frac{n}{n} y\Big)\\
 Y'&=Y\times \Big(\frac{n}{n}x+\frac{0}{n} y,&&\frac{n-1}{n}x+\frac{1}{n}
  y,&&\dots&&\frac{2}{n}x+\frac{n-2}{n} y,&&\frac{1}{n}x+\frac{n-1}{n} y \Big).
\end{align*}
These sets are translates of $(X,0,\dots,0)$ and $(Y,0,\dots,0)$ respectively, which are isometric copies of $X$ and $Y$. Moreover, both translation vectors are perpendicular to the first copy of $\mathbb R^d$, namely $(\mathbb R^d,0,\dots,0)$. Therefore, there exists some $\beta\geq0$ such that $X'\cup Y'$ is an isometric copy of $(X,0)\cup((Y,\beta)$ (which is an isometric copy of $(X,0)\cup(Y,-\beta)$). Furthermore, $\beta$ must equal the distance between the centres of $X'$ and $Y'$. Since $X$ and $Y$ have the same centre and the difference between the translation vectors is
$\frac1n x-\frac1n y$ in every copy of $\mathbb R^d$, we have that 
$\beta=\sqrt {n\frac{1}{n^2}\|x-y\|^2}=\frac{1}{\sqrt{n}}\|x-y\|=\lambda_n$, which finishes the proof of the lemma. 
\end{proof}
Therefore, putting together Lemma~\ref{l:deeper} and Lemma~\ref{l:higher}, by choosing $n$ sufficiently large, we get that Theorem~\ref{t:prisms} is true for all positive $\lambda$. Finally, as mentioned above, after performing a reflection we get that Theorem~\ref{t:prisms} is true for all $\lambda\neq 0$, which finishes the proof of the theorem.
\end{proof}
The above result provides us with a tool for generating Ramsey sets. The following corollary is an example.
\begin{corollary}\label{cor:piramid}
Let $X$ be a finite transitive set in $\mathbb R^d$ and let $z$ be a point in $\mathbb R^{d+1}$ that does not belong to the hyperplane containing $X$. Then the pyramid with base $X$ and apex $z$ (in other words, the point set $X \cup \{z\}$) is subtransitive. Moreover, if $X$ is subsoluble, then so is the pyramid, which implies that it is Ramsey too.
\end{corollary}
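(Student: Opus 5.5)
The plan is to apply Theorem~\ref{t:prisms} with $Y$ taken to be the $G$-orbit of the foot of the perpendicular from the apex. The only subtlety is putting everything in coordinates where $X$ (or a transitive or soluble superset of it) spans the ambient space and the group is finite.

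\emph{Transitive case.} First I replace $\mathbb R^d$ by the affine hull $A$ of $X$, identified isometrically with some $\mathbb R^{d_0}$. Let $G$ be the group of isometries of $A$ preserving $X$. Since $X$ affinely spans $A$, each such isometry is determined by the permutation it induces on $X$. Hence $G$ is finite, and it acts transitively on $X$ by assumption.

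Next I write $z=p+h\nu$, where $p$ is the orthogonal projection of $z$ onto $A$, $\nu$ is a unit vector orthogonal to $A$, and $h\neq 0$ because $z\notin A$. The set $X\cup\{z\}$ then lies isometrically in $A\times\mathbb R\cong\mathbb R^{d_0+1}$ as $(X,0)\cup(\{p\},h)$. Now put $Y=Gp=\{g(p):g\in G\}$, a finite subset of $A$ on which $G$ acts transitively by construction. Theorem~\ref{t:prisms}, applied to $X$, $Y$, $G$ and $\lambda=h$, shows that $(X,0)\cup(Y,h)$ is subtransitive. This set contains $(X,0)\cup(\{p\},h)$, which is a copy of the pyramid, so the pyramid is subtransitive.

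\emph{Subsoluble case.} Here $X$ sits inside a finite set $X'$ with a soluble transitive group of symmetries. I choose coordinates so that $X'$ affinely spans $\mathbb R^{d'}$. As before, the relevant soluble transitive group $G$ of isometries of $\mathbb R^{d'}$ is then finite, being a subgroup of the (finite) group of symmetries of $X'$. I embed $X$ into $X'\subset\mathbb R^{d'}$, and with it the affine hull of $X$ and the point $p$ (the foot of the perpendicular from $z$ in the original picture), keeping all distances among $X\cup\{p\}$. The point $(p,h)\in\mathbb R^{d'}\times\mathbb R$ then has the same distances to the copy of $X$ as $z$ had: $p$ is the orthogonal projection in both pictures, and the new direction is orthogonal to all of $\mathbb R^{d'}$.

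Applying Theorem~\ref{t:prisms} to $X'$, $Y=Gp$, the soluble group $G$ and $\lambda=h$ gives a soluble transitive set containing $(X',0)\cup(Gp,h)\supseteq(X,0)\cup\{(p,h)\}$. This is a copy of the pyramid, so the pyramid is subsoluble, and it is Ramsey by K\v r\'i\v z's theorem.

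The main point needing care is this coordinate bookkeeping rather than any new idea. I must check that the group in the theorem is finite, which is why I restrict to the affine span. I must also check that the apex's foot $p$ need not be the centre of $X$. That is handled by taking $Y$ to be a whole orbit $Gp$ instead of a single point, since the theorem only needs $G$ to act transitively on $Y$.
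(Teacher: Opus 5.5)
Your proof is correct and is essentially the paper's argument: take $Y$ to be the $G$-orbit of the foot of the perpendicular from the apex, apply Theorem~\ref{t:prisms}, and observe that $(X,0)\cup(Y,\lambda)$ contains a copy of the pyramid. Your extra bookkeeping also fills a small gap in the paper. You restrict to the affine hull so that $G$ is finite, and in the second part you pass to a soluble superset $X'\supseteq X$ instead of assuming the transitive group of $X$ is itself soluble. The paper's proof only covers the case where $G$ is soluble, while the statement assumes merely that $X$ is subsoluble.
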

\begin{proof} Let $G$ be a finite transitive group of isometries that acts on $X$. As before, we may assume without loss of generality that $G$ fixes the origin. Let $z=(y,\lambda)$, where $\lambda \neq 0$, and define the  transitive set $Y=\{g(y):g\in G\}$. Then $G$ acts transitively on both $X$ and $Y$, and as such, Theorem~\ref{t:prisms} applies. Therefore, $(X,0)\cup((Y,\lambda)$ is subtransitive, and if $G$ is soluble, then it is in fact subsoluble. However, $(X,0)\cup((Y,\lambda)$ contains $(X,0)\cup\{(y,\lambda)\} $, the desired pyramid, which finishes the proof.
\end{proof}
This tells us, for example, that when $d=2$, any three-dimensional pyramid that has the base a regular polygon is Ramsey. Such an example is illustrated below, where $X$ is a regular hexagon.

\vspace{1em}
\begin{center}
\includegraphics[width=25em]{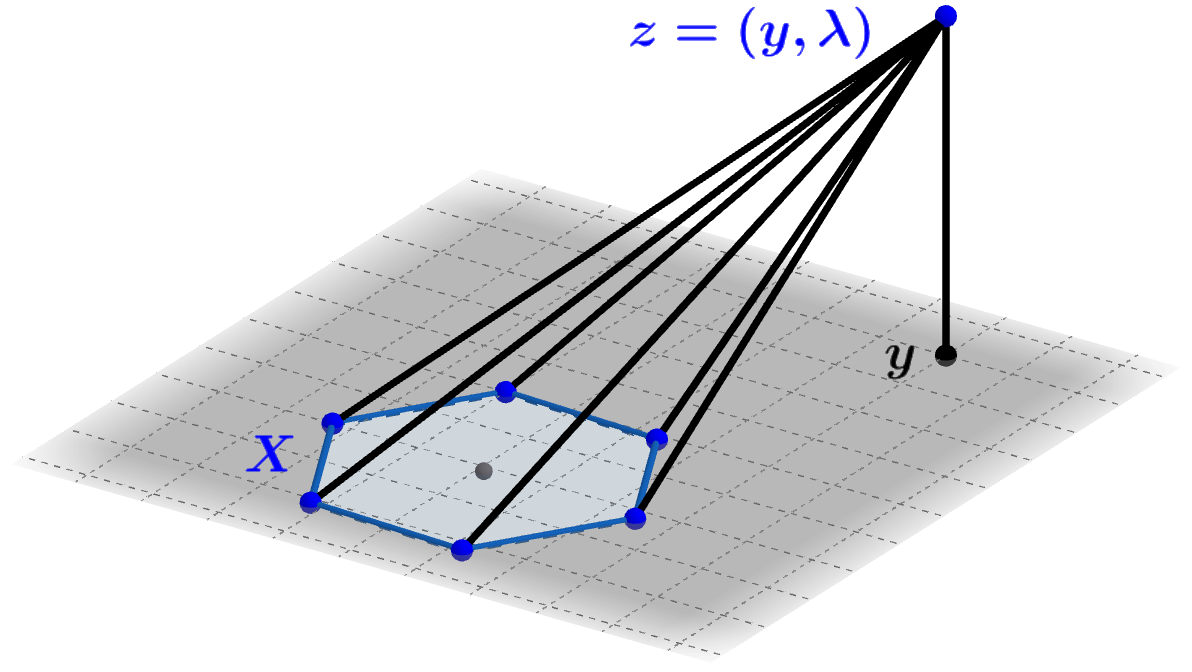}
\end{center}
If `subtransitive' is replaced by `Ramsey', we do not know if Corollary~\ref{cor:piramid} remains true. We discuss this at length in Section 4.

Here is another application. It is important to note that the two regular polygons can have different 
radii. 

\begin{corollary}\label{cor:regular}
Let $X$ and $Y$ be any two regular polygons in $\mathbb R^2$ with the same centre. Then the set $(X,0) \cup (Y,\lambda) \subset \mathbb R^3$ is Ramsey, for any $\lambda \neq 0$.
\end{corollary}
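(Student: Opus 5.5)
We will deduce this directly from Theorem~\ref{t:prisms}. The task is to produce a single finite soluble group $G$ of isometries of $\mathbb R^2$ that acts transitively on both $X$ and $Y$.

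Let $X$ be a regular $p$-gon and $Y$ a regular $q$-gon, both centred at the common centre, which we translate to the origin. Write $X=\{r e^{i(\theta+2\pi j/p)}: 0\le j<p\}$ and $Y=\{s e^{i(\phi+2\pi j/q)}: 0\le j<q\}$, identifying $\mathbb R^2$ with $\mathbb C$. The naive choice is the cyclic group of rotations by multiples of $2\pi/\mathrm{lcm}(p,q)$, but it fails in general. It does act on $X$ only if $p$ divides its order and the rotation group of $X$ is a subgroup; however, the full rotation group $C_N$ with $N=\mathrm{lcm}(p,q)$ maps $X$ to a rotated copy of $X$ rather than to $X$ itself unless $N=p$.

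The fix is to take the rotation group $C_p$ generated by $\rho_p$, the rotation by $2\pi/p$, and similarly $C_q$, and to let $G=\langle \rho_p,\rho_q\rangle$. This group is the cyclic group generated by rotation by $2\pi/\mathrm{lcm}(p,q)$, so the problem recurs. So instead we should not insist that $G$ preserves $X$. Theorem~\ref{t:prisms} only requires transitivity on $X$, meaning $G$ maps $X$ to itself, so we genuinely need a common symmetry group.

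The correct choice is the cyclic group $G$ generated by rotation by $2\pi/\gcd(p,q)$. Its order is $g=\gcd(p,q)$, and it preserves both $X$ and $Y$. However, it is not transitive on them when $p\neq g$.

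So we enlarge the polygons. By Lemma-type monotonicity (subsets of subsoluble sets are subsoluble), it suffices to embed $X$ and $Y$ in larger sets $X^*\supseteq X$ and $Y^*\supseteq Y$ that are both transitive under a common soluble group. Take $N=\mathrm{lcm}(p,q)$, let $X^*$ be the regular $N$-gon of radius $r$ containing $X$, and let $Y^*$ be the regular $N$-gon of radius $s$ containing $Y$. Both exist because $p\mid N$ and $q\mid N$. The cyclic group $C_N$ of rotations by multiples of $2\pi/N$ about the origin acts transitively on each of $X^*$ and $Y^*$, and it is abelian, hence soluble.

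Applying Theorem~\ref{t:prisms} to $X^*$, $Y^*$ and $G=C_N$ shows that $(X^*,0)\cup(Y^*,\lambda)$ is subsoluble for every $\lambda\neq0$. This set contains $(X,0)\cup(Y,\lambda)$, which is therefore subsoluble and hence Ramsey by K\v r\'i\v z's theorem.

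The only real point needing care is the embedding step, namely that a regular $p$-gon sits inside the concentric regular $N$-gon of the same radius with a suitably chosen starting angle. This is immediate because the $p$-th roots of unity are among the $N$-th roots of unity when $p\mid N$. It also makes clear that the differing radii and rotation offsets of $X$ and $Y$ cause no difficulty.
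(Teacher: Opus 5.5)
Your final argument is correct and is essentially the paper's proof: embed both polygons in concentric regular $N$-gons for a common multiple $N$ of $p$ and $q$, apply Theorem~\ref{t:prisms} with the cyclic rotation group $C_N$, and pass to the subset $(X,0)\cup(Y,\lambda)$. The exploratory paragraphs about the groups generated by rotations through $2\pi/\mathrm{lcm}(p,q)$ and $2\pi/\gcd(p,q)$ are dead ends and partly self-contradictory, so delete them; the last three paragraphs stand on their own.
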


\begin{proof}
    By embedding $X$ and $Y$ into larger regular polygons, we may assume that $m=n$. Then the group of rotations about the centre of $X$ generated by a rotation of angle $2 \pi /n$ acts transitively on each of $X$ and $Y$. The result follows by Theorem~\ref{t:prisms}. 
    \end{proof}

\section{The block sets conjecture and soluble subtransitive sets}
The block sets conjecture, which first appears in \cite{LRW}, is a purely combinatorial statement which implies that every subtransitive set is
Ramsey. In fact, it is equivalent to the assertion that, for
any transitive set $X$, not only is $X$ Ramsey but moreover for any $k$ there is some power $X^n$ such that whenever $X^n$ is $k$-coloured there is a monochromatic
copy of $X$ blown up by a fixed factor $\alpha$ -- in other words, for any $k$ there exist $n$ and $\alpha$ such that $(1/\alpha) X^n$ is `$k$-Ramsey for $X$'.

In order to arrive at the abstract form in which the block sets conjecture is stated, we first introduce some terminology.

We define a \textit{template} over $[m]$ to be a non-decreasing word $T\in[m]^l$ for some $l$. Next, given a template $T$, we define what we mean by a \textit{block set with template $T$}. 

Let $S$ be the set of all words on alphabet $[m]$ of length $l$ that are permutations of the
word $T$.  A \textit{block set} with template $T$ in $[m]^n$ is a set $B$ of words on alphabet $[m]$ of length $n$, formed in the following way. First, select pairwise disjoint subsets $I_1,\dots I_l\subset[n]$ all of the same size $d$ say, and elements $a_i$ for each $i\notin\bigcup_j I_j$. In other words, we select the `block' positions for all letters of $T$, counted with multiplicity, as well as the fixed `reference' word outside them.

And now a word $w$ is in $B$ if and only if $w_i=a_i$ for all $i\notin\cup_j I_j$ and there exists $v\in S$ such that $w_i=v_j$ if $i\in I_j$. We say that this set $B$ is a block set with \textit{block size} or simply \textit{degree} $d$.

We are now ready to state the block sets conjecture.
\begin{conjecture}[\cite{LRW}]
Let $m$ and $k$ be positive integers and let $T$ be a template over $[m]$. Then there exist positive integers $n$ and $d$ such that whenever $[m]^n$ is $k$-coloured there exist a monochromatic block set of degree $d$ with template $T$.
\label{conj:block}
\end{conjecture}
The block sets conjecture has been verified for a few templates, most notably $1\underbrace{2\dots 2}_{s}\underbrace{3\dots 3}_{t}$ for any $s,t\in\mathbb N$ \cite{LRW}, and $1234$ (a direct consequence of K\v r\'i\v z's result and the fact that $S_4$ is soluble). We remark that blocks of size 1 are in general insufficient -- for example for the template 123. (It was shown in \cite{ILW} that in fact blocks of size 2 always suffice for this template, for any number of colours.)

Given a template $T\in[m]^l$, the geometric sets that are associated with it are the following. Let $\alpha_1,\dots\alpha_n \in\mathbb R$. Then the set of all points in $\mathbb R^l$ whose coordinates have as many $\alpha_i$ as $T$ has $i$, for all $i\in[m]$, corresponds to the template $T$. In other words, if the block sets conjecture is true for $T$, then this set is Ramsey. Moreover, if $\alpha_1,\dots,\alpha_n$ are algebraically independent, then the only symmetries of $X$ are the ones given by permuting the $l$ coordinates. 

Therefore, we define the \textit{symmetry group of a template} $T\in[m]^l$ to be $S_l$, which acts on the set of all the permutations of $T$ (by permuting the coordinates). Thus, in what follows, a subgroup of the symmetry group of a template $T\in[m]^l$ is equivalent to a subgroup of $S_l$ (with the same action). 

Note that if a template $T$ has a soluble group of symmetries that acts transitively then the corresponding geometric set $X$ (for any choice of the $\alpha_i$) has the property, by 
K\v r\'i\v z's result, that, for every $k$, there is some blowup of $X^n$ for some $n$, that is
$k$-Ramsey for $X$. And then it follows, by considering the case when the $\alpha_i$ are algebraically independent, that the block sets conjecture holds for that template. 

Now, for several templates $T$ that do \textit{not} have such a subgroup of symmetries, the result of Behague shows that the corresponding geometric sets do embed into soluble sets. Hence those geometric sets are Ramsey, but now we unfortunately  cannot apply the `backwards' direction and deduce that the template satisfies the block sets conjecture. So it is natural to ask if there can ever be a template $T$ that does not have a soluble transitive subgroup of its symmetries, but that embeds into a template that does have one -- it would then follow that $T$ satisfies the block sets conjecture.

Our aim now is to show that this can indeed happen. This is perhaps quite surprising; we do not know if this is an isolated case or the start of a more general pattern.

\begin{theorem}
\label{thm:bsc}
The template $1223333$ does not have a transitive soluble symmetry group, but it embeds in the template $12233333$, which has a transitive soluble symmetry group. \end{theorem}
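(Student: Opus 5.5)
The plan has two independent parts: a group-theoretic obstruction for $1223333$, and an explicit soluble transitive group for $12233333$, together with the observation that the first template sits inside the second.

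\textbf{No soluble transitive group for $1223333$.} The permutations of $T=1223333$ correspond to ordered partitions $(\{a\},B,C)$ of $[7]$ with $|B|=2$ and $|C|=4$. There are $7!/(1!\,2!\,4!)=105=3\cdot5\cdot7$ of them. Suppose a subgroup $H\le S_7$ acts transitively on these words and is soluble.
\begin{itemize}
\item By orbit--stabiliser, $105$ divides $|H|$. In particular $7$ divides $|H|$, so $H$ contains an element of order $7$, which in $S_7$ is a $7$-cycle. Hence $H$ is transitive on $[7]$.
\item By Galois' classical theorem, a soluble transitive permutation group of prime degree $p$ is conjugate to a subgroup of $\mathrm{AGL}(1,p)$. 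So $H$ embeds in $\mathrm{AGL}(1,7)$, which has order $42$.
\item Since $5\nmid 42$, this contradicts $5\mid|H|$.
\end{itemize}
This part is routine once Galois' theorem is invoked; the only care needed is to translate transitivity on words into the divisibility $105\mid|H|$.

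\textbf{The embedding.} I will use the embedding that appends a final letter $3$. This sends the set of permutations of $1223333$ injectively into the set of permutations of $12233333$: these are exactly the permutations of the longer template whose last letter is $3$. Geometrically, for any choice of the $\alpha_i$, the set attached to $1223333$ appears as the face $x_8=\alpha_3$ of the set attached to $12233333$, so it is an isometric copy. Combined with the discussion preceding the theorem, this gives the block sets conjecture for $1223333$ once the second template has a soluble transitive group.

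\textbf{A soluble transitive group for $12233333$.} Here the words correspond to pairs $(a,B)$ with $a\in[8]$, $B$ a $2$-subset of $[8]\setminus\{a\}$. There are $8\cdot 21=168$ of them. Identify $[8]$ with $\mathbb F_8$ and take
\[
H=\mathrm{A\Gamma L}(1,8)=\{x\mapsto c\,x^{2^i}+b : c\in\mathbb F_8^*,\ b\in\mathbb F_8,\ 0\le i\le 2\}.
\]
This group has order $8\cdot7\cdot3=168$ and is soluble, with normal series $\mathbb F_8\lhd \mathbb F_8\rtimes\mathbb F_8^*\lhd H$ and abelian (indeed cyclic or elementary abelian) factors.
\begin{itemize}
\item Translations make $H$ transitive on the choice of $a$, so it suffices to show that the stabiliser of $0$, namely $\Gamma L(1,8)=\mathbb F_8^*\rtimes C_3$ of order $21$, is transitive on the $21$ two-subsets of $\mathbb F_8^*$.
\item Scaling by $c$ takes $\{u,v\}$ to $\{cu,cv\}$, so the scaling orbits of two-subsets are indexed by the ratio $r=v/u\ne1$ up to inversion. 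This gives the three classes $\{r,r^{-1}\}$ for $r\in\mathbb F_8^*\setminus\{1\}$.
\item The Frobenius $x\mapsto x^2$ sends a ratio $r$ to $r^2$. Since $\mathbb F_8^*$ is cyclic of order $7$ with generator $\omega$, the three classes are $\{\omega^{\pm1}\},\{\omega^{\pm2}\},\{\omega^{\pm3}\}$. Squaring maps $\pm1\mapsto\pm2\mapsto\pm4=\mp3\mapsto\pm6=\mp1$, so it cycles the three classes.
\item Hence $\Gamma L(1,8)$ is transitive on the $21$ two-subsets, so $H$ is transitive, indeed regular, on all $168$ words.
\end{itemize}
This explicit verification of transitivity for $\mathrm{A\Gamma L}(1,8)$ is the main step where a computational slip could occur.
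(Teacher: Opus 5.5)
Your proposal is correct. The positive half is the same as the paper's: the same group $A\Gamma L_1(\mathbb F_8)$ of order $168$, acting on positions identified with $\mathbb F_8$. The paper proves transitivity by normalisation. It translates the $1$ to $0_F$ and scales one $2$ to $1_F$. It then applies $x\mapsto x^\sigma$, possibly after one more scaling, according to whether the other $2$ sits at $\alpha^k$ with $k\in\{1,2,4\}$ or $k\in\{3,5,6\}$. Your argument is a tidier version of this: scaling orbits of $2$-subsets of $\mathbb F_8^*$ are indexed by the ratio class $\{r,r^{-1}\}$, and the Frobenius cycles the three classes. It also makes the orbit structure, and hence regularity, visible at once.

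The genuine difference is in the negative half. The paper gets transitivity on $[7]$ by tracking the letter $1$ and uses only the bound $|H|\ge 105$. It then appeals to the list of transitive subgroups of $S_7$: only $S_7$, $A_7$ and $L(3,2)$ have order at least $105$, and none is soluble. You use the stronger fact $105\mid|H|$ and obtain transitivity from a $7$-cycle. You then replace the table by Galois' theorem that a soluble transitive group of prime degree $p$ lies in $\mathrm{AGL}(1,p)$. So $|H|$ divides $42$, contradicting $5\mid|H|$. This trades a classification lookup for a single classical theorem. It also applies verbatim to any template of prime length $p$ with at least two distinct letters whose number $N$ of permutations does not divide $p(p-1)$, since $p\mid N$ automatically supplies the $p$-cycle.

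You also make the embedding explicit, which the paper leaves implicit. Combinatorially, restricting a block set with template $12233333$ to the words with $3$ on the last block gives a block set with template $1223333$ of the same degree. One small wording point: the slice $x_8=\alpha_3$ is a hyperplane section, not in general a face of the convex hull (it is one only when $\alpha_3$ is extreme among the $\alpha_i$). This does not affect your isometric-copy claim.
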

\begin{proof}
We first note that if a subgroup of $S_7$ acts transitively on the set of all permutations of $1223333$, then, by tracking the position of `1', we see that it must be a transitive subgroup of $S_7$.

To show that $1223333$ does not have a transitive soluble symmetry group, we show that $S_7$ does not have a soluble subgroup that acts transitively on the set of all permutations of $1223333$. There are $\frac{7!}{2!4!}=105$ permutations of $1223333$, thus if a group $G$ acts transitively on them it must have order at least $105$ (fix $x$ a permutation; then the set $\{g.x:g\in G\}$ must have all $105$ permutations).

However, it is well known that the only transitive subgroups of $S_7$, up to conjugacy, of at least this size are $S_7,A_7$, and $L(3,2)$ \cite[Appendix A]{conway}, neither of which is soluble (see for example \cite{groups}). This finishes the first claim, that $1223333$ does not have a transitive soluble symmetry group.
     
Therefore, it remains to show that $12233333$ has a transitive soluble symmetry group. As before, we are searching for a group that acts transitively on a set of size $\frac{8!}{2!5!}=168$, therefore its order must be at least $168$.

Let us first look at the finite field $\mathbb F_8$. To avoid confusion, we write $0_F$ and  $1_F$ be the additive and multiplicative identity respectively. Moreover, we know that $\mathbb{F}_8\setminus\{0_F\}$ equipped with multiplication is a cyclic of order 7. Let $\alpha$ be a generator. Furthermore, let $G$ be the group of affine semilinear transformation on $\mathbb{F}_8$, denoted in literature by $A\Gamma L_1(\mathbb F_8)$. This group is comprised of actions on $\mathbb F_8$ of the form $x\mapsto ax^\sigma+b$ where $a,b\in F_8$, where $a\not=0$ and $\sigma\in\{1,2,4\}$. This group has size $7\times3\times8=168$. Moreover, it is a standard fact that the group $A\Gamma L_1(\mathbb F_q)$ soluble for all prime powers $q$.

Let $X$ be the set of all permutations of $12233333$. We identify the positions with the elements of $\mathbb F_8$ in the following order $0_F,1_F,\alpha,\alpha^2,\dots,\alpha^6$. For example, in the permutation $12323333$, `1' is at $0_F$ and the `2's are at $1_F$ and $\alpha^2$. Therefore, $G$ acts on an element of $X$ by acting on the positions, identified as elements of $\mathbb F_8$. For example, the map $\alpha x$ sends $12233333$ to $13223333$ since it fixes $0_F$, thus `1' and cycles the rest of coordinates, as shown below.
\begin{alignat*}{18}
12233333\hspace{1em} &&\overset{1}{\underset{0_F}{\rule{1em}{1pt}}.} && \overset{2}{\underset{1_F}{\rule{1em}{1pt}}}. && \overset{2}{\underset{\alpha}{\rule{1em}{1pt}}}.&& \overset{3}{\underset{\alpha^2}{\rule{1em}{1pt}}}. && \overset{3}{\underset{\alpha^3}{\rule{1em}{1pt}}}.&&\overset{3}{\underset{\alpha^4}{\rule{1em}{1pt}}}. && \overset{3}{\underset{\alpha^5}{\rule{1em}{1pt}}}.\overset{3}{\underset{\alpha^5}{\rule{1em}{1pt}}}\\
\alpha.12233333\hspace{1em} &&\overset{1}{\underset{0_F}{\rule{1em}{1pt}}.} && \overset{2}{\underset{\alpha}{\rule{1em}{1pt}}}. && \overset{2}{\underset{\alpha^2}{\rule{1em}{1pt}}}.&& \overset{3}{\underset{\alpha^3}{\rule{1em}{1pt}}}. && \overset{3}{\underset{\alpha^4}{\rule{1em}{1pt}}}.&&\overset{3}{\underset{\alpha^5}{\rule{1em}{1pt}}}. && \overset{3}{\underset{\alpha^6}{\rule{1em}{1pt}}}.\overset{3}{\underset{1_F}{\rule{1em}{1pt}}}&&\hspace{0.2em}=\hspace{0.2em}&&\overset{1}{\underset{0_F}{\rule{1em}{1pt}}.} && \overset{3}{\underset{1_F}{\rule{1em}{1pt}}}. && \overset{2}{\underset{\alpha}{\rule{1em}{1pt}}}.&& \overset{2}{\underset{\alpha^2}{\rule{1em}{1pt}}}. && \overset{3}{\underset{\alpha^3}{\rule{1em}{1pt}}}.&&\overset{3}{\underset{\alpha^4}{\rule{1em}{1pt}}}. && \overset{3}{\underset{\alpha^5}{\rule{1em}{1pt}}}.\overset{3}{\underset{\alpha^5}{\rule{1em}{1pt}}}
\end{alignat*}
To show that this action is transitive on $X$, let $\bf{x}$ be an arbitrary permutation of $12233333$. We will show that we can map $\bf{x}$ by repeatedly acting on it with elements of $G$ so that the 1 is at $0_F$, and the 2's are at $\{1_F,\alpha\}$, i.e. to $12233333$. First, let $b\in\mathbb F_8$ be the position of `1' in $\bf{x}$. By applying the map $g_1(x)=x-b$ we send the `1' to $0_F$. Next, let $a\in\mathbb F_8\setminus\{0_F\}$ be the position of one of the `2's in $g_1\bf{x}$. By applying the map $g_2(x)=a^{-1}x$ we send one of the `2's to $1_F$, while keeping the `1' at $0_F$. The other 2 is now at some element $c\in \mathbb F_8\setminus\{0_F\}$. Since $\alpha$ is a generator of $\mathbb F_8^\setminus\{0_F\}$ we must have that $c=\alpha^k$ for some $1\leq k<7$. If $k$ is one of $1,2,4$, then by letting $\sigma$ equal $1,4,2$ respectively and applying the map $x\mapsto x^\sigma$ (which fixes $0_F$ and $1_F$) we obtain the desired permutation. If $k\in\{3,5,6\}$ then, by applying the map $x\mapsto c^{-1}x$ we fix the `1', send one `2' to $1_F$ and the second one to $c^{-1}$. Since $c^{-1}=\alpha^{k'}$ where $k'\in \{1,2,4\}$, we are back in the previous case, which finishes the proof.
\end{proof}
\section{Open Problems}

Perhaps the most intriguing open problem is to determine whether or not Corollary~\ref{cor:piramid} extends to the setting of general Ramsey sets. In other words, if we assume merely that $X$ is Ramsey 
(whether or not it is subtransitive), does it follow that $X \cup \{ z \}$ is Ramsey?

\begin{conjecture}\label{conj:onepoint}
Let $X$ be a Ramsey set in $\mathbb R^d$ and let $z$ be a point in $\mathbb R^{d+1}$ that does not belong to the hyperplane containing $X$. Then the set $X \cup \{ z \}$ is Ramsey.
\label{conj:onepoint}
\end{conjecture}
What is intriguing about this conjecture is that it would follow from \textit{either} of the two competing conjectures about Ramsey sets: that they are the spherical sets or that they are the subtransitive sets. Indeed, apart from statements of the form `the transitive set $S$ is Ramsey' (which of course would follow from either conjecture), we do not know of any other conjectured statements with this
property.

We mention that if $z$ is far enough away from $X$ then $X \cup \{ z \}$ must at least be 2-Ramsey.
To see this, suppose that the perpendicular distance $t$ from $z$ to the plane of $X$ is greater than the
circumradius of $X$. Given a red-blue colouring of $\mathbb R^{2d+1}$, we may find a monochromatic copy of $X$: say that this copy is red. But then, unless we have a monochromatic copy of $X \cup \{ z \}$, it follows that there is a sphere of radius $t$ in $\mathbb R^{d+1}$ that is entirely blue. Now, each copy of $X$ living on this sphere forces two points to be red (the two possible corresponding positions of $z$), and putting these together as the copy of $X$ varies we obtain a larger sphere that is entirely blue. If we keep going, we eventually obtain a monochromatic sphere that is large enough to contain a copy of $X \cup \{ z \}$, as required.

There are some very natural prism-type configurations that are not covered by our main result. One intriguing example is as follows: the set in $\mathbb R^3$ consisting of a rectangle and, above it, the same rectangle rotated by an irrational multiple of $\pi$ (about its centre), as illustrated below.
\begin{center}
\includegraphics[width=27em]{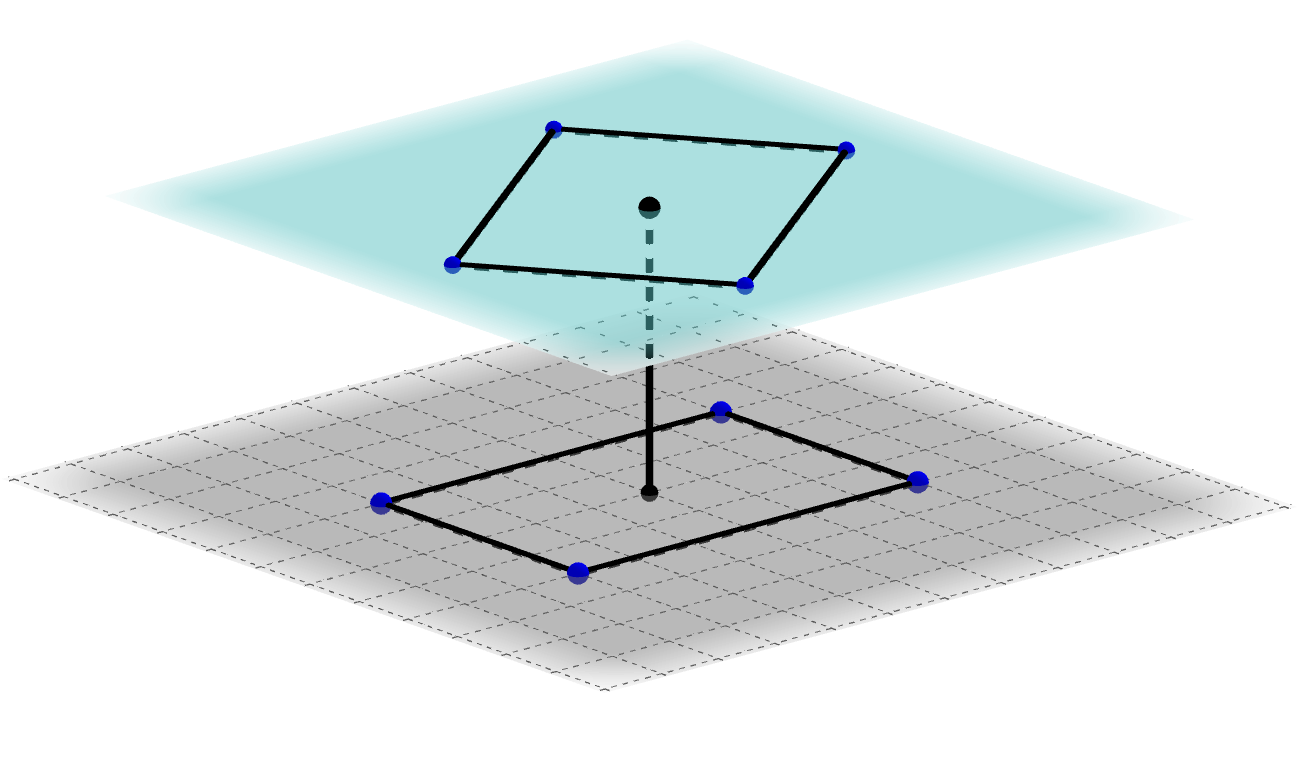}
\end{center}
The point is that, while each rectangle is certainly transitive, the actual group of isometries that acts transitively is not the same in the two cases -- the two groups are isomorphic, but they are not equal. Another example, also in $\mathbb R^3$, consists of a rectangle together with, above it, two points at the same height whose midpoint does not lie above the centre of the rectangle but with the vector from the midpoint to the centre of the rectangle being perpendicular to the line segment (this is to ensure that the set is spherical), and with the angle between the line segment and the rectangle not being a rational multiple of $\pi$. See the picture below. It would be very interesting to know if these configurations are Ramsey.
\begin{center}
\includegraphics[width=27em]{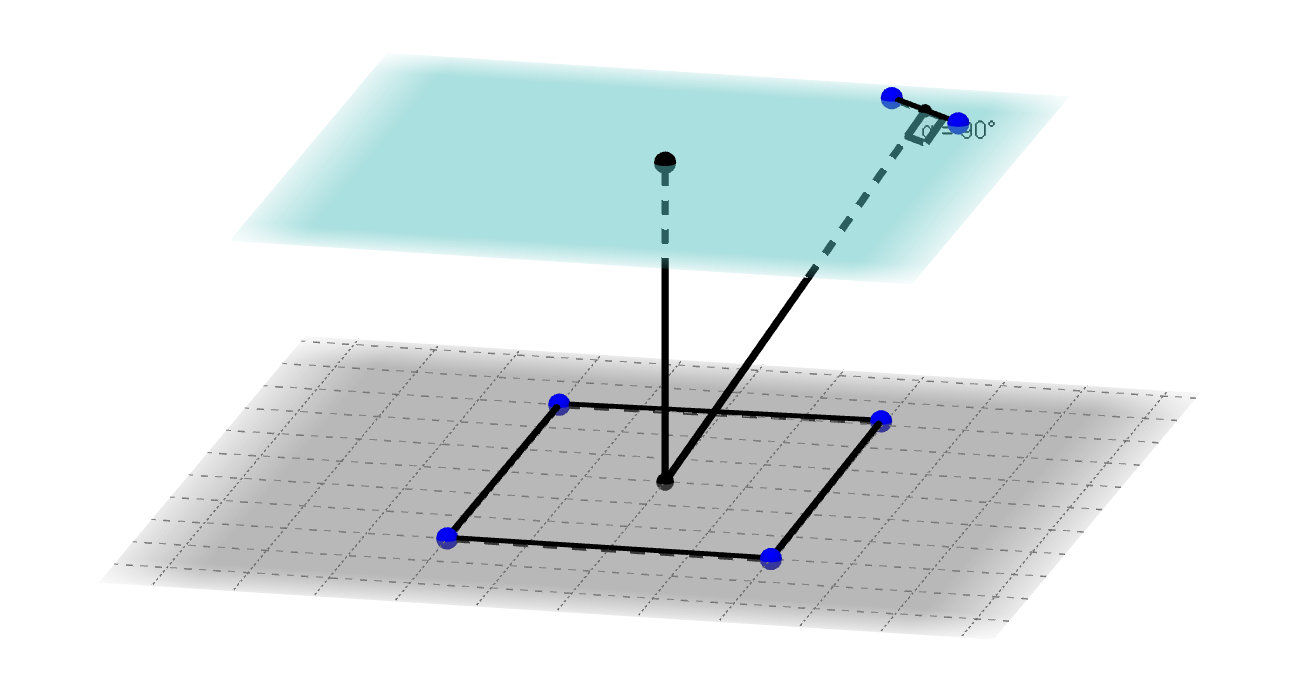}
\end{center}

\bibliographystyle{amsplain}
\bibliography{bibliography}
\Addresses
\end{document}